\theoremstyle{plain}
\newtheorem{thm}{\protect\theoremname}[section]
 \newcommand\thmsname{\protect\theoremname}
 \newcommand\nm@thmtype{theorem}
 \theoremstyle{plain}
  \theoremstyle{remark}
  \newtheorem{rem}[thm]{\protect\remarkname}
  \theoremstyle{definition}
  \newtheorem*{example*}{\protect\examplename}
  \theoremstyle{definition}
  \theoremstyle{plain}
  \theoremstyle{plain}
  \newtheorem{prop}[thm]{\protect\propositionname}
  \theoremstyle{plain}
  \theoremstyle{definition}
  \newtheorem{my@rem}[thm]{Remark}
  \renewenvironment{rem}{\begin{my@rem}}{\end{my@rem}}
  \providecommand{\examplename}{Example}
  \providecommand{\lemmaname}{Lemma}
  \providecommand{\propositionname}{Proposition}
  \providecommand{\remarkname}{Remark}
  \providecommand{\theoremname}{Theorem}
\providecommand{\theoremname}{Theorem}
 \providecommand{\corollaryname}{Corollary}
\def\Q{{\Bbb Q}}
\def\R{{\Bbb R}}
\def\L{{\mathcal L}}
\def\Z{{\Bbb Z}}
\def\H{{\mathcal H}}
\def\s{{\mathcal S}}
\def\C{{\Bbb C}}
\def\CVD{{\hfill\hfil{\lower 2pt\hbox{\vrule\vbox to 7pt
{\hrule width  5pt\varphifill\hrule}\varphirule}}}\par}
\begin{document} 

\title{ Integral distances  from (two) given lattice points}

%%% Integral distances among lattice points
%%% Integral distances among three lattice points
%%%  Integral distances  from (two) lattice points

\author{Umberto  Zannier}
%\date{December, 2016}
\maketitle

\date{\today}

\centerline{\sc Abstract}

\medskip

{\it We completely characterize pairs of lattice points $P_1\neq P_2$ in the plane with the property that there are infinitely many 
lattice points $Q$ whose distance from both $P_1$ and $P_2$ is integral.

 In particular  we show that it suffices that  $P_2-P_1\neq (\pm 1,\pm 2), (\pm 2,\pm 1)$, and  we show that $|P_1-P_2|>\sqrt{20}$ suffices for having infinitely many  such $Q$ outside any finite union of lines. 
 
 We use only elementary arguments, the crucial ingredient being a theorem of Gauss which does not appear  to be  often applied. We  further  include related   remarks (and open questions), also  for distances from  an  arbitrary prescribed  finite set of  lattice points.} % $P_1,\ldots ,P_r$. }

 \bigskip

\section{Introduction}

In this short elementary article we shall be concerned with {\it integral distances} from {\it given  lattice points}, which here means   points in  the integral lattice $\Z^2$,  i.e.,   points in the plane $\R^2$ having integer coordinates. We suppose  that certain lattice points are  given in advance, and let another lattice point vary, asking that  it has integral distance from each of the given points. 
 
 \medskip
 
This kind of  issue  has been often   considered in various shapes. Of course, the  natural  case %problem  to  describe the 
of lattice points  having integral distance from   {\tt one given}  lattice point  boils down  to %the description of
 the {\it Pythagorean triples}, i.e., the triples $(a,b,c)$ of integers such that $a^2+b^2=c^2$. Indeed, the given point  may be taken as the origin $O$ and  letting the variable  point have coordinates $a,b$ and  distance $c$  from $O$,  Pythagoras Theorem 
yields  the said equation. 
%%%%%%%%%%%%%%%%%%%%%%%%%%%%%%%%%%%%%%%%%%%%%%%%%%%%%%%%%%%%%%%%%%%%%%%%%%%%%%%%%%%%%%%%%%%%%%%%%%%%%%%%%%%%%%%%%%%%%%%%%%%%%%%%%%%%%%%%%%%%%%%%%%%%%%%%%%%%%%%%%%%%%%%%%%%%%%%%%%%%%%%%%%%%%%%%%%%%%%%%%%%%%%%%%%%%%%%%%%%%%%%%%%%%%%%%%%%%%%%%%%%%%%%%%%%%%%%%%%%%%%%%%%%%%%%%%%%%%%%%%%%%%%%%%%%%%%%%%%%%%%%%%%%%%%%%%%%%%%%%%%%%%%%%%%%%%%%
\begin{comment}
a  solution $(a,b,c)$ of the diophantine equation
 \begin{equation}\label{E.pit}
 x^2+y^2=z^2,\qquad x,y,z\in\Z.
 \end{equation}
 \end{comment}
 %%%%%%%%%%%%%%%%%%%%%%%%%%%%%%%%%%%%%%%%%%%%%%%%%%%%%%%%%%%%%%%%%%%%%%%%%%%%%%%%%%%%%%%%%%%%%%%%%%%%%%%%%%%%%%%%%%%%%%%%%%%%%%%%%%%%%%%%%%%%%%%%%%%%%%%%%%%%%%%%%%%%%%%%%%%%%%%%%%%%%%%%%%%%%%%%%%%%%%%%%%%%%%%%%%%%%%%%%%%%%%%%%%%%%%%%%%%%%%%%%%%%%%%%%%%%%%%%%%%%%%%%%%%%%%%%%%%%%%%%%%%%%%%%%%%%%%%%%%%%%%%%%%%%%%%%%%%%%%%%%%%%%%%%%%%%%%%%%%%%%%%%%%%%%%%%%%%%%%%%%%%%%%%%%%%%%%%%%%%%%%%%%%%%%%%%%%%%%%%%%%

 Needless to say, these triples, of which the simplest nontrivial   is $(3,4,5)$,  are more than well known, having a very   ancient origin, the oldest   record coming from the  Babylonian tablet  `PLIMPTON 322', dated about 1800 B.C..  
A general   parametrization of these triples  % the solutions of \eqref{E.pit} 
 also  goes back to long ago (at latest to Euclid, see Weil's book \cite{W} for an accurate historical account)  and is very well known and easily described: if $(a,b,c)$ is a solution, on switching  if necessary $a,b$ we have that $a,b,c$ have the shapes given by $a=d\cdot 2pq,\quad b=d\cdot (p^2-q^2),\quad c=d\cdot (p^2+q^2)$,
% \begin{equation*}
 %a=d\cdot 2pq,\qquad b=d\cdot (p^2-q^2),\qquad c=d\cdot (p^2+q^2),
% \end{equation*}
 where $d,p,q$ are suitable integers, such that  $p,q$ are coprime and have opposite parity.  These triples were considered and used by classical writers, like Diophantus, and later Fermat, who most probably was inspired by them to formulate his `Last Theorem'. %\footnote{Of course this is the assertion that for $n>2$ the diophantine equation $a^n+b^n=c^n$ has only the trivial  solutions with $abc=0$,  proved only in the 90s by A. Wiles after efforts by many  others.}
 
 \medskip
 
Now, in analogy with the above,  it seems challenging  to ask the following:

\medskip

{\bf Question}: {\it  What can be said about the  lattice points $Q$ having integral distance from each out of  ({\it two} or more)  {\tt given} (distinct) lattice points $P_1,P_2,\ldots ,P_r$ ?} 

\medskip

For instance, in the same direction of the usual  diophantine queries, one can ask: 

\medskip

\centerline{\it When do these lattice points $Q$  make up an  infinite set ?}

\medskip

 This context reminds of the  famous   Anning-Erd\"os theorem, asserting that no infinite set of points in the plane can have {\it all} mutual distances  integral, unless all the points are collinear  (see the paper \cite{AE} by both Anning and Erd\"os and see also Erd\"os' article \cite{E} for a simplification). However the present issue, though certainly related to this,  is different,  since we are {\it  fixing}  some points in advance, and we only look  at the distances of $Q$ from each of  the given points (without conditions on the distances among them). 

\medskip

 When e.g.   $r=2$ {\it and} the distance $|P_1-P_2|$ is  integral, it is easy to see that  for our points $Q$  the triangle $P_1P_2Q$ is {\it Heronian},  % `up to a factor $2$',
 i.e., has integral sides and %half-
 integral area.\footnote{Indeed the area is  a difference between the area of an integral rectangle and some right-angled triangles with integer sides; each of them has integer area since one leg at least must have even length.}  Conversely, it has been proved by P. Yiu \cite{Y}  that any Heronian  triangle %in $\R^2$  
  is congruent % again up to a factor $2$,  
 to a {\it lattice triangle}, i.e., to  a triangle having all  vertices at lattice points. And J. Carlson proved that there are infinitely many Heronian triangles with a side of given integral length (see  \cite{C}, Thm. 2). 
Hence our issue is not unrelated to Heronian  triangles as well; these  have been studied  and parametrized (see e.g. \cite{B} and \cite{C}). However, again there are also several  differences. In fact: 
(i) we are not assuming that any of the distances among the $P_i$ is integral. 
(ii) We are thinking of $P_1,P_2,\ldots ,P_r$, as being given in advance  (so that our  attention   is on a point - $Q$ - rather than a triangle). 
(iii) Even assuming $r=2$ and $|P_1-P_2|$  integral, although a Heronian triangle congruent with $P_1P_2Q$ could be imbedded as a lattice triangle (by the cited paper \cite{Y}), {\it a priori} the side $P_1P_2$ could be different from the corresponding side of the embedded Heronian triangle (namely, it could  have a different slope still being of the same length). 

\medskip

So the present  problem falls aside Heronian triangles  in several aspects, and in fact we have no knowledge of it in the existing  literature (which seemed to us somewhat surprising).

\medskip

To go ahead, it will be convenient to introduce a minimum notation.

First, we shall denote by  $(\cdot ,\cdot)$ the usual scalar product in $\R^2$, and  by $|\cdot |$ the associated distance, so that, as above,  $|Q-P|$ is   the (usual euclidean) distance between $P$ and $Q$. 

Also, for $P_1,\ldots ,P_r\in\Z^2$   distinct lattice points, we set
\begin{equation}\label{E.S}
\s=\s(P_1,\ldots ,P_r)=\{Q\in\Z^2: |Q-P_i|\in\Z\  \hbox{for $i=1,\ldots ,r$}\}.
\end{equation}

%%%%%%%%%%%%%%%%%%%%%%%%%%%%%%%%%%%%%%%%%%%%%%%%%%%%%%%%%%%%%%%%%%%%%%%%%%%%%%%%%%%%%%%%%%%%%%%%%%%%
%%%%%%%%%%%%%%%%%%%%%%%%%%%%%%%%%%%%%%%%%%%%%%%%%%
%%%%%%%%%%%%%%%%%%%%%%%%%%%%%%%%%%%%%%%%%%%%%%%%%%
%%%%%%%%%%%%%%%%%%%%%%%%%%%%%%%%%%%%%%%%%%%%%%%%%%
%%%%%%%%%%%%%%%%%%%%%%%%%%%%%%%%%%%%%%%%%%%%%%%%%%
%%%%%%%%%%%%%%%%%%%%%%%%%%%%%%%%%%%%%%%%%%%%%%%%%%
%%%%%%%%%%%%%%%%%%%%%%%%%%%%%%%%%%%%%%%%%%%%%%%%%%

\begin{comment}
More generally, for a subring $A$ of $\C$, we set
\begin{equation*}\label{E.S2}
\s_A=\s(_AP_1,\ldots ,P_r)=\{Q\in A^2: |Q-P_i|\in\Z\  \hbox{for $i=1,\ldots ,r$}\},
\end{equation*}
so $\s=\s_\Z$, the case we are really thinking about.  Observe that $\s\subset\s_A$ as soon as $1\in A$.
\end{comment}
%%%%%%%%%%%%%%%%%%%%%%%%%%%%%%%%%%%%%%%%%%%%%%%%%%
%%%%%%%%%%%%%%%%%%%%%%%%%%%%%%%%%%%%%%%%%%%%%%%%%%
%%%%%%%%%%%%%%%%%%%%%%%%%%%%%%%%%%%%%%%%%%%%%%%%%%
%%%%%%%%%%%%%%%%%%%%%%%%%%%%%%%%%%%%%%%%%%%%%%%%%%
%%%%%%%%%%%%%%%%%%%%%%%%%%%%%%%%%%%%%%%%%%%%%%%%%%
%%%%%%%%%%%%%%%%%%%%%%%%%%%%%%%%%%%%%%%%%%%%%%%%%%
%%%%%%%%%%%%%%%%%%%%%%%%%%%%%%%%%%%%%%%%%%%%%%%%%%

\medskip

We are mainly interested in understanding  how `large' $\s$ is, and especially in saying when $\s$ is infinite. So, for $r=1$ we have just  the Pythagorean triples, completely described by the formulas recalled  above. In particular their set is {\it Zariski-dense} in the cone $x^2+y^2=z^2$ in $3$-space, i.e., there is no algebraic curve inside the cone containing all the integer points. Hence there is no algebraic curve in the plane containing $\s(P_1)$ (even  restricting the points to have coprime coordinates, after taking $P_1=O$).  

To discuss this issue for $r>1$,  let us start with an easy assertion, which in fact is essentially well known and is  inserted here only for completeness (with a proof based on the same principles as in the quoted papers on the Anning-Erd\"os problem). We formulate it as a Proposition:

\begin{prop} \label{P.r3} The set $\s(P_1,P_2)$ is contained in a finite union of hyperbolas plus the (orthogonal)  lines  $P_1P_2$ and  the line of points equidistant from $P_1,P_2$. 

For  $r\ge 3$, the set  $\s$ is finite and effectively computable unless all  the $P_i$  are collinear  and have mutual integral distances, and then  all  but finitely many points in $\s$ lie on the  line $P_1P_2$.
\end{prop}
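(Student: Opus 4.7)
For the first statement, I would note that for $Q\in\s(P_1,P_2)$ the quantity $c:=|Q-P_1|-|Q-P_2|$ is an integer with $|c|\le|P_1-P_2|$, so it takes only finitely many values. For each admissible $c$ the locus $\{Q\in\R^2:|Q-P_1|-|Q-P_2|=c\}$ is precisely the classical focus-focus description of a branch of a hyperbola with foci $P_1,P_2$ (when $0<|c|<|P_1-P_2|$), the perpendicular bisector of $P_1P_2$ (when $c=0$), or a ray on the line $P_1P_2$ (when $|c|=|P_1-P_2|$, which forces this common distance to be an integer). Taking the union over the finitely many admissible values of $c$ yields the first assertion.

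For the second statement (with $r\ge 3$), my plan is to apply the first part to the pairs $(P_1,P_2)$ and $(P_1,P_3)$, so that $\s\subset\s(P_1,P_2)\cap\s(P_1,P_3)$. Each factor is a finite union of hyperbolas and at most two special lines. Two algebraic curves of bounded degree meet either in a finite, effectively computable set or along a common irreducible component, so the point is to identify all possible shared components between the two families. Elementary case-checking should suffice: two conics sharing one focus but with different second focus cannot coincide; the perpendicular bisector of $P_1P_2$ cannot equal the line $P_1P_3$, since the latter contains $P_1$ while the former does not; and the two perpendicular bisectors coincide only if $P_2=P_3$. Hence the only possible shared component is the common line $P_1P_2=P_1P_3$, which arises exactly when $P_1,P_2,P_3$ are collinear.

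When no three of the $P_i$ are collinear, the above already gives that $\s$ is finite and effectively computable. When on the contrary all $P_i$ lie on a common line $\ell$, I would parametrize $\ell=\{P_1+k(p,q):k\in\R\}$ with $(p,q)$ a primitive integer direction vector, so that each $P_i=P_1+k_i(p,q)$ for an integer $k_i$, and any lattice point $Q=P_1+k(p,q)$ on $\ell$ satisfies $|Q-P_i|=|k-k_i|\sqrt{p^2+q^2}$. These distances are simultaneously integral for infinitely many $k$ if and only if $\sqrt{p^2+q^2}\in\Z$, which is equivalent to the condition that the pairwise distances $|P_i-P_j|$ are themselves integers; otherwise at most one value of $k$ works. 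Points of $\s$ lying off $\ell$ must, by the component analysis of the previous paragraph, lie in the finite transverse intersection of components from the two families with distinct focus-sets, contributing only finitely many points. Combining these observations yields both halves of the second assertion.

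The main care point, and where I would be most cautious in the write-up, is the bookkeeping of shared irreducible components, to make sure no unexpected coincidence among hyperbolas or degenerate lines with foci in $\{P_1,P_2,P_3\}$ secretly produces an additional one-dimensional family of solutions; effectivity of the finite remainder is then immediate since all relevant curves have explicit rational (indeed quadratic) equations once the integer $c$ is fixed.
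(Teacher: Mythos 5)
Your first part coincides with the paper's own argument (the difference $|Q-P_1|-|Q-P_2|$ is an integer bounded by $|P_1-P_2|$, hence finitely many focus--focus loci, each a hyperbola branch, the perpendicular bisector, or a ray on $P_1P_2$). Your second part takes a genuinely different route: the paper eliminates $|Q|$ between the two squared distance relations to obtain the linear condition $(Q,kP'-k'P)=\mathrm{const}$ and then analyses when this degenerates, whereas you invoke uniqueness of the foci of an irreducible conic plus B\'ezout to rule out shared components between the curve families for $(P_1,P_2)$ and $(P_1,P_3)$. That bookkeeping is sound: distinct focus pairs give distinct irreducible conics, the perpendicular bisector of $P_1P_j$ never contains $P_1$, and the only possible common one-dimensional component is the line through $P_1,P_2,P_3$ when these are collinear. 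Your approach is arguably cleaner geometrically; the paper's is more computational but sets up equation \eqref{E.hyp2}, which it reuses in the proof of Theorem \ref{T.r2}.

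The genuine gap is that your final case split is not exhaustive. You treat ``no three of the $P_i$ collinear'' and ``all $P_i$ on a common line,'' but omit the intermediate case in which $P_1,P_2,P_3$ lie on a line $\ell$ while some $P_j\notin\ell$. There your analysis of $\s(P_1,P_2)\cap\s(P_1,P_3)$ leaves the whole line $\ell$ as a candidate one-dimensional component, and your parametrization argument does not apply since not all the $P_i$ lie on $\ell$; as written, the proof therefore does not establish the ``unless \emph{all} the $P_i$ are collinear'' clause of the statement. The repair is immediate with your own tools: $\ell$ contains $P_1$, so it is not the perpendicular bisector of $P_1P_j$; it is not the line $P_1P_j$, since $P_j\notin\ell$; and it is not a component of any hyperbola with foci $P_1,P_j$. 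Hence $\ell$ meets every curve covering $\s(P_1,P_j)$ in at most finitely many points, so $\s\cap\ell$ is finite and $\s$ itself is finite and effectively computable in this case as well. You should state this explicitly (or at least reduce to the triple $(P_1,P_2,P_j)$, which is not collinear, and apply your first case).
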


\begin{small}
\begin{rem} The (easy) proof actually yields the same assertions even letting $Q$ run through  $\R^2$, i.e., dropping the request   that the coordinates of $Q$ lie in $\Z$ (but still requiring that the {\it distances}  from the $P_i$ are integers).
\end{rem}
\end{small}

%\medskip

In particular, the second part of the proposition  gives back  the Anning-Erd\"os theorem quoted above. Interesting questions arise if we ask for explicit bounds for the cardinality $\# \s$, which we shall briefly comment on at the end.

This proposition says in particular that the crucial case for the infinitude of $\s$ occurs when  $r=2$, which was in fact  our  motivation for this note. We have the following remark: 

\begin{thm}\label{T.r2} 
For every finite union $\L$ of lines, the set $\s(P_1,P_2)- \L$ is infinite, unless the point  $P:=P_2-P_1$, after possible sign changes and switching of its coordinates,  belongs to the following list (where we replace by translation $P_1,P_2$, resp. by $O,P=P_2-P_1$).

(i) $P=(0,1)$: now  $\s$ consists of the integer points on the $y$-axis. 
 
(ii)  $P=(1,1)$: now  $\s$ is infinite and contained in the line $x+y=1$.  
  
(iii) $P=(0,2)$: now  $\s$ consists of the integer  points on the $y$-axis.
  
(iv) $P=(1,2)$: now  $\s=\{(1,0), (0,2)\}$. 
  
 (v) $P=(2,2)$: now $\s$ is infinite and contained in the line $x+y=2$.  
 
 (vi)  $P=(2,4)$: now  $\s$ is  the union of an infinite set  contained in the line $x+2y=5$ and the set $\{(0,2), (4,0), (-1,0), (3,4)\}$. 
\end{thm}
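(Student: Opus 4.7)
The plan is to combine Proposition~\ref{P.r3} with a Pell-equation analysis on each admissible hyperbola. By Proposition~\ref{P.r3}, any $Q \in \s(P_1,P_2)$ off the line $P_1P_2$ and off the perpendicular bisector lies on some hyperbola $H_k$ with foci $P_1,P_2$, defined by $|Q-P_1|-|Q-P_2|=k$ for an integer $k$ with $0<|k|<|P|$. Any lattice point on $H_k$ automatically belongs to $\s$, since the factorisation $|Q-P_1|^2-|Q-P_2|^2 = k(|Q-P_1|+|Q-P_2|)$ forces both $|Q-P_i|$ to be rational, and hence integer because their squares are. Because a line meets a non-degenerate conic in at most two points, producing infinitely many lattice points on a single $H_k$ suffices to make $\s\setminus\L$ infinite.

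Place $P_1=O$, $P_2=P=(p,q)$ and substitute the Pythagorean parametrisation $Q=(v^2-u^2,\,2uv)$, so that $|Q|=u^2+v^2\in\Z$; the further requirement $|Q-P|=u^2+v^2-k$ expands to the integer Diophantine equation
\[
G_k(u,v) \;:=\; (k+p)u^2 - 2q\,uv + (k-p)v^2 \;=\; \tfrac{k^2-|P|^2}{2}.
\]
The discriminant of $G_k$ equals $4(|P|^2-k^2)$, and $G_k$ is a genuine indefinite form precisely when $|P|^2-k^2$ is not a perfect square. When that is the case, the fundamental unit of the Pell equation $X^2-(|P|^2-k^2)Y^2=1$ generates infinitely many new integer solutions from any initial one, hence infinitely many distinct lattice points on $H_k$. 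Together with the swap parametrisation $Q=(2uv,\,v^2-u^2)$ and the reflection/swap symmetries on $P$, this reduces the theorem to showing: for each $P$ outside the exceptional list, there exist (a) some admissible $k$ (with $0<|k|<|P|$ and $k\equiv|P|^2\pmod{2}$) for which $|P|^2-k^2$ is not a square, and (b) at least one integer solution of $G_k(u,v)=(k^2-|P|^2)/2$ for that $k$.

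Step (a) is a direct arithmetic check: enumerating the $P$ with $|P|^2\leq 20$ shows that the vectors (up to signs and coordinate swap) for which $|P|^2-k^2$ is a square for \emph{every} admissible $k$ (including the cases where there is no admissible $k$ at all) are exactly those in (i)--(vi); for every other $P$ such a $k$ exists. For the exceptional $P$'s the corresponding $G_k$ factor as products of linear forms, and the resulting finite systems are solved by hand to yield the explicit descriptions of $\s$ in (i)--(vi), with the infinite part (when present) necessarily lying on $P_1P_2$ or on the perpendicular bisector. The main obstacle is step (b), and this is where the theorem of Gauss on primitive representations by binary quadratic forms of given discriminant comes in: it translates solvability of $G_k(u,v)=(k^2-|P|^2)/2$ into explicit quadratic-residue conditions on $|P|^2-k^2$, and, exploiting the freedom to choose $k$ across roughly $|P|$ values in the admissible range, these conditions can always be met for non-exceptional $P$. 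The bound $|P|^2>20$ announced in the abstract is precisely the threshold beyond which this argument goes through uniformly, without the case-by-case analysis needed for the handful of small $P$'s.
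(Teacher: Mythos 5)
Your overall strategy---reduce to lattice points on a Pell-type conic indexed by $k=|Q-P_1|-|Q-P_2|$, find one integer seed solution, and propagate it via units of $\Q(\sqrt{|P|^2-k^2})$---is the same as the paper's, and your reductions are essentially sound: the form $G_k$, its discriminant $4(|P|^2-k^2)$ (the paper's $4\delta$, up to normalization), and the observation that the exceptional list consists precisely of the $P$ for which every admissible $k$ makes $|P|^2-k^2$ a square. But your step (b), which you yourself identify as the main obstacle, is not proved, and the tool you invoke for it cannot do the job. Gauss's theory of representations by binary quadratic forms of given discriminant, via quadratic-residue (genus) conditions, decides whether an integer is represented by \emph{some} form in a genus of that discriminant, not by the \emph{specific} form $G_k$; when the class number exceeds one (and nothing here forces it to equal one, nor forces $(k^2-|P|^2)/2$ to be coprime to the discriminant), residue conditions cannot certify solvability of $G_k(u,v)=(k^2-|P|^2)/2$. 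Even granting such a criterion, the assertion that the conditions ``can always be met'' for non-exceptional $P$ is exactly the content of the theorem and is given no argument. The theorem of Gauss the paper actually uses is a different one (Disquisitiones, art.~216): \emph{one} integer point on an irreducible hyperbola with irrational points at infinity yields infinitely many. The existence of that one point is then not delegated to any representation criterion but constructed explicitly: take $Q=(x,b)$ on the horizontal line through $P=(a,b)$, so that $|Q-P|=|x-a|$ is automatically an integer and the only constraint is $2(k-a)x=b^2-(k-a)^2$; choosing $k=a\pm1$ or $k=a\pm2$ according to the parity of $b$ makes $x$ an integer for free, and one then checks that $\delta=|P|^2-k^2$ is positive and non-square for that particular $k$ precisely outside the exceptional list. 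This explicit seed, coupling the choice of $k$ to the existence of a solution, is the missing idea in your write-up; without it your argument does not close.

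Two secondary gaps. First, in the exceptional cases (ii), (v), (vi) the infinite part of $\s$ lies on the perpendicular bisector, i.e.\ corresponds to $k=0$, where your whole reduction (division by $k$ to recover integrality of the distances, and the form $G_k$ as a route back to $\s$) degenerates; proving infinitude there requires solving the negative Pell equations $t^2-2u^2=-1$ and $t^2-5u^2=-1$, and the finite sporadic sets in (iv) and (vi) require a separate divisibility argument. Second, the parametrisation $Q=(v^2-u^2,2uv)$ and its swap do not capture all lattice points at integral distance from the origin (e.g.\ $(9,12)$, distance $15$, is of neither form), so solving $G_k(u,v)=c$ ``by hand'' can at best exhibit points of $\s$, not determine $\s$ exactly as the statements (i)--(vi) require; for the positive direction this restriction is harmless, but for the exceptional cases one must work with general $(x,y)$ as the paper does.
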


In particular, the first conclusion applies if $|P_1-P_2|>\sqrt{20}$; and $\s$   turns out anyway  to be  infinite unless $|P_1-P_2|=\sqrt 5$, which amounts to $P_1-P_2\in\{(\pm 1,\pm2), (\pm 2,\pm 1)\}$. 

Our proof of Theorem \ref{T.r2} will be short and entirely elementary,   based on   the classical theory of Pell Equation.  The main point is the use of a theorem of Gauss which,  to my knowledge,  is only seldom applied, and seems not  to be   as   well known as one would expect. 
The arguments  will also give supplementary information on the distribution of $\s(P_1,P_2)$, on which we shall briefly comment at the end.

\medskip

{\bf Acknowledgements}. I thank  David Masser for helpful  comments and Amos Turchet for a careful reading and the suggestion of references. 

\section{Proofs}

\begin{proof}[Proof of Proposition \ref{P.r3}] Set $d_i:=|Q-P_i|$, so the $d_i$ are integers $\ge 0$. Since we have $d_i\le d_j+|P_i-P_j|$,  and since the $P_i$ are given, the differences  $k_{ij}:=d_i-d_j$ are integers bounded in absolute value: $|k_{ij}|\le |P_i-P_j|$;  hence they can assume only finitely many values for varying $Q\in \s$. This is the bulk of the matter,  and we may correspondingly partition $\s$ into  finitely many sets.  Suppose then to fix the $k_{ij}$, and denote by $\s'$ the corresponding subset of $\s$. For given $i\neq j$, the equation  (for $Q$) given by 
\begin{equation}\label{E.hyp}
|Q-P_i|=|Q-P_j|+k_{ij},\qquad Q\in \R^2,
\end{equation}
defines (if $|k_{ij}|\le |P_i-P_j|$)a branch of a  hyperbola, possibly degenerating to a (half)  line, as is known from high school (in fact essentially by definition).       Let us inspect this. 

To simplify notation, set  $P_j=O$, $P_i=P$, $k_{ij}=k$. Then squaring \eqref{E.hyp}  \footnote{Note that we may gain extra solutions after squaring.}  and noting that $|Q-P|^2=|Q|^2-2(Q,P)+|P|^2$ easily yields   
\begin{equation}\label{E.hyp2}
-2(Q,P)+|P|^2-k^2=2k|Q|.
\end{equation}

This also easily says when the equation defines a (half) line. Indeed, we may assume by a rotation  that the line in the $xy$-plane  has equation $y=c$, leading, if $P=(a,b)$, $Q=(x,c)$, to $-2ax-2bc+a^2+b^2-k^2=2k\sqrt{x^2+c^2}$.  Suppose that  this holds for at least three values of  $x\in\R$; then it must hold  identically. In this case,    either $k=0$,   yielding  that  $Q$ is equidistant from $P_i,P_j$ (which holds when $Q$ runs through  a whole line),   or  $x^2+c^2$ is  the square of a linear polynomial in $x$. But this holds if and only if 
$c=0$, in which case  $k^2=a^2+b^2=|P|^2$, $-a=k$,  so $b=0$ and $O,P,Q$ are collinear. Also,  $Q$ runs through a half of   the line $P_iP_j$,  which half being determined by the sign of $k$ and  by  the property that it does not intersect the interior of the segment between $P_i$ and $P_j$.   

This proves the first claim. Note also  that we may check effectively the totality of  these conditions, once the $P_i$ are effectively given (because then  the $k_{ij}$ vary in only  finitely many ways which can be enumerated). 

\medskip

We may now suppose  that  $r\ge 3$, and,  to simplify notation,    that one of the $P_i$ is $O$, labelling two other ones by $P,P'$ and the respective constants by $k,k'$. Equations \eqref{E.hyp2} for $P,P'$ yield that 
 $(Q,kP'-k'P)$ is constant for $Q\in \s'$, so either $kP'=k'P$, or all relevant $Q$ lie on a single line.  The latter case has been discussed above: either we obtain  at most  two points  $Q$, or  $O,P,P'$ are collinear and  all $Q\in \s'$  lie on the corresponding  line.  Let us then  assume that $kP'=k'P$.  If $k=k'=0$ then $Q$ is equidistant from $O,P,P'$ which yields at most a single  point $Q$.  Otherwise, let $k\neq 0$, so $P'=(k'/k)P$ and  we obtain again that   $O,P,P'$    are collinear and $|P'|^2=(k'/k)^2|P|^2$. Also, $k'\neq 0$ (since $P'\neq O$) and $k'\neq k$ (since $P\neq P'$). Plugging this into \eqref{E.hyp2} for $P$ and $P'$ we obtain easily $k'(k-k')|P|^2=k^2k'(k-k')$, whence $|P|=|k|$, $|P'|=|k'|$. But then $Q$ is collinear with $O,P,P'$. 
 
 Repeating the argument for each triple of points and all  values of the $k_{ij}$, we obtain that either $\s$ is finite and computable, or all the $P_i$ are collinear and up to  finitely many computable exceptions each $Q\in \s$   lies on the corresponding line. Plainly in this case either this last set  is empty or all  the   $|P_i-P_j|$ are integral,  completing the proof.
 \end{proof} 

\begin{small}
As remarked above,  the arguments never use that the relevant  $Q$ are lattice points, only that the distances from the $P_i$ are integers, so the conclusions hold  for points $Q\in\R^2$ as well.
\end{small}

\medskip

\begin{proof}[Proof of Theorem \ref{T.r2}]
One of the points $P_1,P_2$ may be  supposed to be the origin $O$, and let us denote by $P=:(a,b)\neq O$ the  other point. As in the statement, by an integral orthogonal transformation (i.e., up to sign changes and switch of coordinates) we may suppose without loss that $b\ge a\ge 0$.

Let us first assume  that $P=:(a,b)$ is not in the list of exceptions,  namely that $P\neq (0,1), (1,1),  (0,2), (1,2), (2,2), (2,4)$.    

For a point  $Q=(x,y)\in \s$, let $z:=|Q|$, $z-k:=|Q-P|$, so $x,y,z,k$ are integers with $z\ge 0$, $-|P|\le |Q|-|Q-P|=k\le |P|$. We have thus the equations
\begin{equation}\label{E.xyz}
x^2+y^2=z^2,\qquad (x-a)^2+(y-b)^2=(z-k)^2.
\end{equation}

\begin{small}
We note in passing that for $a^2+b^2\neq k^2$ the Pythagorean triples provided by any solution of \eqref{E.xyz} are `essentially' primitive. In fact,  both $\gcd(x,y,z)$ and $\gcd(x-a,y-b,z-k)$ divide  $a^2+b^2-k^2$.
\end{small}

\medskip

Conversely, for a given integer $k$, an integral solution $(x,y,z)$ of \eqref{E.xyz} yields a point $Q=(x,y)\in \s$ (even though $z$ need not be equal to $|Q|$).  

Note also that there can be integral solutions only if $k\equiv a+b\pmod 2$. In general in the sequel we shall work  with integral values of $k$ satisfying this condition and also such that $k\neq 0$ and $k^2<a^2+b^2$. 

The  equations \eqref{E.xyz} define a curve in affine $3$-space.\footnote{It turns out that this curve is irreducible unless  $\delta:=a^2+b^2-k^2=0$:  see equations  \eqref{E.z}  and \eqref{E.sost2} below. Also note that for $\delta=0$ the left-hand side of \eqref{E.sost2}  equals $-(bx-ay)^2$. Equations   \eqref{E.z}  and \eqref{E.sost2}  also  show  that the ideal generated by equations \eqref{E.xyz} is not reduced when $k\delta=0$.}  We want to find its projection on the $xy$-plane, i.e. to eliminate $z$. Subtracting the second equation from the first we obtain
\begin{equation}\label{E.z}
2kz=2ax+2by-\delta,
\end{equation}
where we have put
\begin{equation}\label{E.delta}
 \delta=a^2+b^2-k^2.
\end{equation}
Now, multiplying the first of \eqref{E.xyz} by $4k^2$ and using \eqref{E.z}, we get
\begin{equation}\label{E.sost2}
(2ax+2by-\delta)^2-4k^2(x^2+y^2)=0,
\end{equation}
which may be written in the shape 
\begin{equation}\label{E.sost}
4(a^2-k^2)x^2+8abxy+4(b^2-k^2)y^2-4a\delta x-4b\delta y+\delta^2=0.
\end{equation}
\medskip

 For  {\it given}  integers $a,b,k$  not all zero, equation \eqref{E.sost} represents an affine conic, denoted $\H$, %, and the discussion in the proof of Proposition \ref{P.r3} proves that 
 and any $Q=(x,y)\in \s$ leads to an integral point on it.
 Conversely, if $x,y$ is an integral solution (for given integers $a,b,k$)  then  we have $(2ax+2by-\delta)^2=4k^2(x^2+y^2)$, hence $2k$ divides $2ax+2by-\delta$. So if we assume $k\neq 0$,  this gives an integral value for $z$ as defined by \eqref{E.z} and an integral solution of the system \eqref{E.xyz}. Hence we obtain a point $Q=(x,y)\in \s$. 
  \medskip

 The shape \eqref{E.sost2} also easily shows that for $k\delta\neq 0$ the affine conic $\H$  is irreducible (even over $\C$). Indeed, let $L$ be a line defined by a hypothetical  linear factor. Then, since $k\neq 0$, the polynomial   $x^2+y^2=(x+iy)(x-iy)$ restricted to $L$ would be a perfect square (by \eqref{E.sost2}), whence either one of the  factors $x\pm iy$ would define $L$  or the restrictions of  both $x\pm iy$ to $L$ would be equal up to a constant factor, and  in any of these cases  the line would pass through the origin. But then $\delta=0$ by \eqref{E.sost2}. 
 
 \medskip
 
The homogenous binary  form of degree $2$ in the equation \eqref{E.sost} defining $\H$ yields its points at infinity;  the    discriminant  of this form  is  easily calculated as $64\left(a^2b^2-(a^2-k^2)(b^2-k^2)\right)=64k^2\delta$.  If  we assume  $k\neq 0$ and $\delta>0$, the conic is a hyperbola (i.e. it has two {\it real} points at infinity). Now, the theory of Pell Equation tells us many things about  integer points on hyperbolas. In particular, Gauss deduced from this  theory  the following theorem:
 
 \medskip
 
 {\sc Gauss Theorem}.  {\it  Let a quadratic polynomial in $x,y$ with integer coefficients define  an (absolutely)  irreducible (affine)  hyperbola and  assume that the discriminant of its quadratic homogeneous part is not a perfect square (i.e. the points at infinity are not defined over $\Q$).
 
 Then if there is one integer point on $\H$, there are infinitely many ones.}
 
 \medskip

An equivalent   statement indeed appears  in Gauss' {\it Disquisitiones Arithmeticae} at art. 216, $3^o$ (see for instance   the translation \cite{G}). See, e.g., L.J.  Mordell's book \cite{M}, Thm. 2, p. 57 for a more modern presentation  of  a proof of this theorem (or see the writer's book \cite{Z}, p. 21), and see the paper  \cite{ABP} for a  generalization.

  \begin{small}
  Due to the elementary nature of this article, and for the  reader's convenience we resume the proof-principle, which  is simple: by `completing the square' one writes the equation in the shape $X^2-DY^2=C$, where $C,D$ are nonzero integers, with $D$ the said discriminant, and where $X,Y$ are  polynomials  in $x,y$ of degree $1$ with integer coefficients. An integer solution $X_0,Y_0$ of the new equation gives back {\it  integers}  $x_0,y_0$ precisely if  $X_0,Y_0$ satisfy certain congruences relative to a fixed modulus $M\neq 0$ depending on the said polynomials. These congruences are  unaffected if we `compose' a solution with a solution of the Pell Equation  $T^2-DU^2=1$  such that $T\equiv 1, U\equiv 0\pmod M$ (this composition corresponds to multiplication in $\Z[\sqrt D]$).  Then, since the Pell Equation $T^2-DM^2V^2=1$   always has infinitely many integer solutions (as expected by Fermat and proved by Lagrange), we obtain an infinity of solutions of our equation  by composition from any given solution. 
 \end{small}
 
 \medskip
 
 Turning back to our context, to prove the theorem it then suffices, for given $(a,b)$ not in the said list,  to produce an integer point on some hyperbola as above, such that $k\neq 0$, and such that $\delta$ is a positive integer not a perfect square.  (Indeed, once we find an infinity of points in $\s$ lying on an irreducible hyperbola, omitting those lying on the finite union $\L$ of lines still leaves us with an infinite set.) 
 
 Let us try with points $Q=(x,b)$. The equations \eqref{E.xyz} become $x^2+b^2=z^2$ and $(x-a)^2=(z-k)^2$. The latter is satisfied if we put $z=x+k-a$. %, where $e=\pm 1$. 
 Substituting into the former we obtain $x^2+b^2=x^2+2(k-a)x+(k-a)^2$, i.e. 
 \begin{equation*}
2(k-a)x= b^2-(k-a)^2.
 \end{equation*}
 To have an integer value for $x$ amounts to $b^2-(k-a)^2$ being multiple of $2(k-a)$. 
 
 \medskip
 
- Suppose first that $b$ is odd. 

If $a\neq 1$ let us choose $k=a-1\neq 0$. % where $e=\pm 1$. 
Then $k-a=-1$ and the divisibility condition is verified. Also, $\delta=a^2+b^2-(a-1)^2=b^2+2a-1$.  Suppose this equals a perfect square $r^2$, $r\ge 0$.  If $a=0$ this entails $b=1$,  which we are excluding. If $a>0$ then $\delta>b^2>0$ so $r\ge b+1$, which is contradictory since $b\ge a$. 

 If $a=1$ let us put $k=2$, so $2(k-a)=2$ which divides $b^2-1$. Then $\delta=b^2-3$. If  $b\ge 3$ this  is positive and cannot be a square, so we are done. This leaves us with  the case $a=b=1$, indeed in the list of exceptions.

 - Things  are similar if $b$ is even. 
 
 Now, if $a\neq 2$ we put  $k=a-2\neq 0$, and again $2(k-a)=-4$ divides $b^2-(k-a)^2$. We have $\delta=b^2+4a-4$.  This is $>0$ unless $a=0,b=2$,  which is in the list of  exceptions. Otherwise, if $a=0$ this cannot be a square (since $b^2-(b-2)^2=4b-4>4$ if $b>2$). If $a>1$ then $\delta>b^2$ and, being even, $\delta$  must be $\ge (b+2)^2=b^2+4b+4>b^2+4a-4$, a contradiction. If $a=1$ or $a=2$ then we put $k=a+2$, again the said divisibility being verified. We have $\delta=b^2-4a-4$, which is $=b^2-8$ or $b^2-12$ in the two cases. This is $>0$ unless $a=1,b=2$ or $a=b=2$, which are exceptional. If $\delta>0$ is a perfect square then $(b/2)^2-2$ or $(b/2)^2-3$ is a perfect square as well. The first case is impossible mod $4$ and the second case entails $b=4$, and  we get the point  $(a,b)=(2,4)$ again exceptional. 
 
 \medskip
 
  This completes the proof of the first part of the theorem.
 
 \medskip
 
 To prove the second  part we again can work with $O,P=P_2-P_1$ in place of $P_1,P_2$, and with the assumption $b\ge a\ge 0$;  we can essentially reverse the above arguments. Let us be explicit. Recall that if we have solutions corresponding to an integer $k$ as above, then $|k|\le |P|$ and $k\equiv a+b\pmod 2$.  
 
$\bullet$  (i)   If $P=(0,1)$ we must have $k=\pm 1$, so $\delta=0$. The equation \eqref{E.sost2} yields $x=0$, so indeed $\s$ consists of the integer points on the $y$-axis. 
 
$\bullet$  (ii)   The case $P=(1,1)$ forces $k=0$, $\delta=2$, and we get the line $L:x+y=1$, of points equidistant from $O,P$.   Now $k=0$ so the above procedure must be modified.  An integer  point $(x,y)\in L$  lies in $\s$ if and only if  $x^2+y^2=x^2+(1-x)^2=2x^2-2x+1=z^2$ is a perfect square. This amounts to $(2x-1)^2-2z^2=-1$, which has indeed infinitely many integer solutions (obtained from $\pm (1+\sqrt 2)^{2m+1}=2x-1+z\sqrt 2$). 
  
$\bullet$  (iii)    If $P=(0,2)$ we must have $k=0, \delta=4$ or $k=\pm 2,\delta=0$. In the first case we get the line $y=1$ of equidistant points. An integral point $(x,1)$ on it  cannot have integral distance from $O$ unless it is $(0,1)$ (we obtain the equation $1=r^2-x^2$ in integers). In the second case,   \eqref{E.sost2} yields $x=0$, so we get that $\s$ consists of the integral points on the $y$-axis.
  
$\bullet$   (iv)   If $P=(1,2)$ then $k=\pm 1, \delta=4$ and \eqref{E.sost} gives $4xy+3y^2=4x+8y-4$.  Writing this as $4x(1-y)=(y-1)(3y-5)-1$, we see that $y-1$ divides $-1$ for every solution, so $y=0,2$, and we find  $\s=\{(1,0), (0,2)\}$. (Now we have the integer points on a hyperbola with rational points at infinity: this is always a finite set.) 
  
$\bullet$  (v)   If $P=(2,2)$ then either $k=0,\delta=8$ or $k=\pm 2,\delta=4$. The first case yields $x+y=2$, and we obtain our points simply by multiplying by $2$ those in the former example $P=(1,1)$. In the second case \eqref{E.sost} becomes  $32xy-32x-32y+16=0$, which is impossible in integers.
  
$\bullet$   (vi)   Finally, if $P=(2,4)$ we must have either $k=0,\delta=20$, or $k=\pm 2,\delta=16$, or $k=\pm 4,\delta=4$. In the first case \eqref{E.sost2} gives $x+2y=5$, so by the former calculations  an integer point  $(x,y)$ is  in $\s$ if and only if $5y^2-20y+25$ is a perfect square, leading to the Pell-type equation $u^2=5(y-2)^2+5$. In turn this amounts to $u=5v$ where the integer $v$ satisfies 
  $(y-2)^2-5v^2=-1$. As in a previous  case, we obtain infinitely many integer solutions from $\pm (2+\sqrt 5)^{2m+1}=y-2+\sqrt 5 v$.
  
  The cases with $k=\pm 2$ lead to $4xy+3y^2=8x+16y-16$, i.e. $4x(2-y)=(y-2)(3y-10)-4$, hence $y-2$ divides $4$ and we obtain the points $(0,2)$ and $(4,0)$. (Of course this is as in the case $P=(1,2)$.) 
    
  The cases with $k=\pm 4$ lead to $4y(x-1)=(x-1)(3x+5)+4$, hence $x-1$ divides $4$, and we find the solutions $(-1,0), (3,4)$. (This is   similar to the previous  case, but  a nontrivial `sporadic' Pythagorean triple now appears !) 
  
  \medskip
  
  This concludes the analysis.  \end{proof}

  \section{Remarks and questions}

 1. {\tt Quantitative estimates}.  One can  ask for a quantification of Proposition \ref{P.r3}, namely   for an explicit estimate of  the cardinality $\# \s$, when $r\ge 3$, or one can ask  for an estimate of the number of points of $\s$ not collinear with the $P_i$ if these last are collinear.  For instance one can ask {\it whether there exists an absolute constant $C$ such that for collinear $P_1,P_2,P_3\in\Z^2$ there are at most $C$ points $Q\in \Z^2$, not collinear with the $P_i$ and having integral distances from each of  them}.  Already this basic question seems to escape from the known techniques. It leads to enquire about  an {\it absolute}  bound for the number of  integer points on certain curves, as e.g. the curve of genus $1$ defined by $xy(x+y)=rx+sy$ for distinct  integers $r,s\neq 0$. It is easy to derive some bound growing less that $\max(|r|,|s|)^\epsilon$ (any $\epsilon >0$) but whether an absolute bound holds, to my knowledge is  a difficult question.
  
\medskip
  
   2. {\tt Location of  points in $\s$}.  It will be noted that  all the cases when the set  `$\s-\hbox{ line} \ OP$'  is infinite in Thm. \ref{T.r2},  `come' from a hyperbola (and an associated Pell Equation), even when the points  lie on lines.  For `general' $P$, our conic $\H$ in the proof of Theorem \ref{T.r2}  is indeed a hyperbola.  In the exceptional cases $P=(1,1), (2,2), (2,4)$, when $k=0$, in fact the points of $\s$ lie on a line, but this   lifts to a hyperbola in the $xyz$-space. 
   
   As to the distribution of these points, of course their coordinates  may be explicitly expressed in terms of  linear recurrences,  have exponential growth and indeed may be parametrized as linear combinations of two exponential functions (as is well known from the theory of Pell Equation).
  
\medskip

  3.   {\tt Points of $\s$ on lines}. In case $r=2$, one can ask when there exist infinitely many points of $\s$ lying on  a single  line; indeed  (as in the previous comment) in most  cases  we     produced points in $\s$ on  a hyperbola, not a line, and the main part of Theorem \ref{T.r2} actually deals with points of $\s$ {\it outside} any finite union of lines. 
  
  By Proposition \ref{P.r3} the intersection of $\s$ with a line can be infinite  only in the trivial case of   the points $Q$   collinear with $P_1P_2$, or for  the  line  of points equidistant from $P_1,P_2$.  Concerning the latter case,  the exceptional list carries some instances of this (i.e. cases (ii), (v) and (vi)).  To give  a general recipe however  turns out  %that the issue is
   to be  quite difficult. For instance when $P:=P_2-P_1=(2r,2s)$, $r,s\in\Z$,  is such that  $r^2+s^2$ is squarefree, one can check that  integral points on the equidistant line  amount to integer solutions $(t,u)$ of the so-called  {\it negative Pell Equation}  $t^2-(r^2+s^2)u^2=-1$, widely studied.  This is known to be solvable in integers  e.g. when $r^2+s^2$ is prime, but no simple  necessary and sufficient  condition is known in general.

  \medskip
  
  4.  {\tt How many hyperbolas ?}   The proof of Thm \ref{T.r2} in most cases  exhibits a {\it single}   hyperbola containing infinitely many points in $\s$. It seems not free of interest to study {\it how many} such hyperbolas one can obtain  (and {\it which ones}) in terms of $P=(a,b)$. In other words, {\it how many irreducible %curve
   components does the Zariski-closure of the set of integral points have ?}  For instance, if $P$ has integral distance from the origin and does not lie on the axes, a well-known theorem of Fermat says that the corresponding right-angled triangle has not square area. This allows to take $k=\pm(a-b)$ in the proof (in place of $k=a\pm e$ with $e=1,2$). The general issue looks  intriguing.
  
\medskip
  
  5. {\tt Rational distances from given rational points}. The corresponding questions for {\it rational distances} from {\tt given}  {\it rational  points} in place of lattice points are sometimes (even)  easier, sometimes difficult. For instance  the set  of rational points with rational distance from two given points ($r=2$) is always infinite and actually Zariski-dense, as can be easily proved by a method similar to the above, or else using in the two equations  \eqref{E.xyz}  two parametrizations for Pythagorean triples  (we are led to rational points on a certain  rational surface).  % whose desingularization is a so-called {\it Del Pezzo surface}. 
  For distances from three given points ($r=3$) one obtains - after desingularization - an elliptic  $K3$-surface,   thus non rational. However it turns out that  this  has still a Zariski-dense set of rational points. Details for these deductions shall appear in the forthcoming note \cite{CTZ} of P. Corvaja and A. Turchet with the author.   \footnote{Elliptic $K3$-surfaces are always expected to have a dense set of rational points over some number field.}   Of course it follows from Proposition \ref{P.r3} that for $r=3$  there are only finitely many integral points (on the suitable affine part of this surface) except for trivial cases. On the other hand, it appears to be an intriguing problem to prove that for $r=3$  (or even for larger $r$)  the integral points over an {\it arbitrary} number field are never Zariski-dense.% (for totally real number fields things should be  similar to the case of $\Z$).
 \footnote{Note that  the equations that we have obtained can be considered over any number field. This  should be assumed to admit an embedding in $\R$ in case we want  to keep  the concept of `distance' used above. } 
  The issue of rational distances  for a larger number $r\ge 4$ of  given rational points is   very difficult, and related to the so-called {\it Erd\"os-Ulam problem}, and deep conjectures in Diophantine Geometry; one expects that the solutions  are never Zariski-dense: see e.g.  the exposition \cite{T} by T. Tao and the paper  \cite{ABT} by K. Ascher, L. Braune and A. Turchet.

 \bigskip \bigskip

 \vfill
 
 Umberto Zannier
 
 Scuola Normale Superiore 
 
 Piazza dei Cavalieri, 7
 
 56126 Pisa - ITALY
 
 {\it umberto.zannier@sns.it}


\begin{thebibliography}{[Bo--Gi--SO]}
 
 \smallskip
 
 \bibitem{ABP} - P. Alvanos, Y. Bilu, D. Piulakis, Characterizing algebraic curves with infinitely many integral points, Int. J. Number Theory, {\bf 5} (2009), 585--590.
 
 \smallskip
 
 \bibitem{ABT} - K. Ascher, L. Braune, A. Turchet, The Erd\"os-Ulam problem, Lang's conjecture, and uniformity, to appear in Bull. of  the LMS.

\smallskip

\bibitem{AE} - N. Anning,  P. Erd\"os,  Integral distances, Bulletin of the American Mathematical Society, 51 (1945), 598--600.
 
\smallskip

\bibitem{B} - H.F. Blichfeldt,  On Triangles with Rational Sides and Having Rational Areas. Annals of Mathematics. {\bf 11}  (1896) 57--60.

\smallskip

\bibitem{C} - J. Carlson,  Determination of Heronian Triangles, Fibonacci Quarterly, {\bf 8} (1970), 498--506

\smallskip

\bibitem{CTZ} - P. Corvaja, A Turchet, U. Zannier, Rational distances from given rational points in the plane, in preparation. 

\smallskip

 \bibitem{E} - P. Erd\"os,  Integral distances, Bulletin of the American Mathematical Society, 51 (1945), 996. 
 
\smallskip

\bibitem{G}  - C.F. Gauss, Disquisitiones Arithmeticae, English  translation by A.A. Clarke, Yale Univ.  Press 1966, repr. Springer Verlag 1986. 

\smallskip

\bibitem{M} - L.J. Mordell, Diophantine Equations, Academic Press, 1969. % Thr. 2 Gauss p. 57

%\smallskip 

%\bibitem{HW} - G.H. Hardy, E.M. Wright, An Introduction to the Theory of Numbers, Oxford Univ Press.



\smallskip

\bibitem{T} - T. Tao, The Erd\"os-Ulam Problem, varieties of general type,  and the Bombieri-Lang conjecture,  at 
 https://terrytao.wordpress.com/2014/12/20/the-erdos-ulam-problem-varieties-of-general-type-and-the-bombieri-lang-conjecture/. 

\smallskip

\bibitem{Y} - P. Yiu, Heronian triangles are lattice triangles, American Mathematical Monthly 108 (2001), 261--263.

\smallskip

\bibitem{W} - A. Weil, Number Theory, An approach through history, From Hammurapi to Legendre, Birkh\"auser,  1984.

\smallskip

\bibitem{Z} - U. Zannier, Lecture Notes on Diophantine Analysis, {\it Appunti} {\bf 8}, Edizioni della Normale, 2014.

\smallskip

\end{thebibliography}
\end{document}